\def\Sk{{\mathcal S_{\ast_{k}}}}
\def\Skm{{\mathcal S_{\ast_{k-1}}}}
\def\XX{{\tilde X}}
\def\aa{{\bm a}}
\theoremstyle{plain}
\newtheorem{theorem}{Theorem}[section]
\theoremstyle{plain}
\theoremstyle{plain}
\newtheorem{lemma}[theorem]{Lemma}
\theoremstyle{plain}
\theoremstyle{plain}
\newtheorem{problem}[theorem]{Problem}
\theoremstyle{plain}
\theoremstyle{definition}
\theoremstyle{remark}
\theoremstyle{remark}
\theoremstyle{remark}
\title{Avoiding  a star of three-term arthmetic progressions}
\author{Masato Mimura\and Norihide Tokushige}
\address{Masato Mimura\\
Mathematical Institute, Tohoku University, Japan}
\email{m.masato.mimura.m@tohoku.ac.jp}
\address{Norihide Tokushige\\
College of Education, Ryukyu University, Japan}
\email{hide@u-ryukyu.ac.jp}
\date{\today}
\begin{document}

\begin{abstract}
We provide an upper bound of the size of a subset $A$ of $\mathbb{F}_p^n$ that does not admit a  $k$-star of $3$-APs. Namely, $A$ contains no configuration of $k$ $3$-APs, sharing the middle term, such that all $2k+1$ terms are distinct. In the proof, we adapt a new method in the recent work of Sauermann. 
\end{abstract}

\maketitle

\section{Introduction}\label{section=introduction}

Let $p$ be a prime number at least $3$. Inspired by the breakthrough by Croot, Lev and Pach \cite{CrootLevPach}, Ellenberg and Gijswijt \cite{EllenbergGijswijt} succeeded in providing an upper bound on the number of elements of a set $A$ in $\mathbb{F}_p^n$ that is (\textit{non-degenerate}) \textit{$3$-AP-free}. That means, there exists no triple $(x_1,x_2,x_3)=(a,b,c)\in A$ consisting of three \textit{distinct} elements that is a solution of the linear equation $x_1-2x_2+x_3=0$. (`AP' stands for an arithmetic progression.) Their result states that such an $A$ satisfies 
\[\label{spade}
\#A \leq (\Lambda_{1,\frac{1}{3},p-1})^n\tag{$\spadesuit$}
\]
for all $p\geq 3$ and $n\geq 1$, where $\#A$ means the cardinality of $A$. 
Here for $\alpha\in \mathbb{R}_{>0}$ and for $m,h\in \mathbb{N}$, $\Lambda_{m,\alpha,h}$ denotes the minimum of the function $G\colon (0,1]\to \mathbb{R}$; $G(u)=u^{-\alpha h}(1+u+u^2+\cdots +u^{mh})$. (It is more common to use the symbol $\Gamma_{p,3}$ for this constant; we use the symbol above to follow more general notation in our sequel paper \cite[Definition~2.6]{MimuraTokushige}.) It follows from \cite[Proposition~4.12]{BCCGNSU} that $0.8414 p\leq \Lambda_{1,\frac{1}{3},p-1}\leq 0.918p$ for all $p\geq 3$. 

In this short paper, instead of the equation $x_1-2x_2+x_3=0$, we study the following system of linear equations in $\mathbb{F}_p^n$:
\begin{equation*}\label{equation:star}
\left\{
\begin{aligned}
x_1+x_2-2x_{2k+1}&=0,\\
x_3+x_4-2x_{2k+1}&=0,\\
x_5+x_6-2x_{2k+1}&=0,\\
\vdots\qquad\quad\\
x_{2k-1}+x_{2k}-2x_{2k+1}&=0.
\end{aligned}
\right.
\tag{$\Sk$}
\end{equation*}

The configuration of the solutions of $\Sk$ 
is a \emph{$k$-tuple of `crossing $3$-APs'}, namely, $k$ 
(not necessarily distinct) $3$-APs 
$(x_1,x_{2k+1},x_2)$, $(x_3,x_{2k+1},x_4),\ldots$, and $(x_{2k-1},x_{2k},x_{2k+1})$, with common middle term $x_{2k+1}$. 
For $A\subseteq \mathbb{F}_p^n$, an $\Sk$-\emph{semishape} in $A$ means 
a solution $(x_1,x_2,\ldots,x_{2k+1})\in A^{2k+1}$ of $\Sk$. 
(Semishapes correspond to \emph{cycles} in \cite{Sauermann}.) 
We say it is \emph{degenerate} if $\#\{x_1,x_2,\ldots,x_{2k+1}\}<2k+1$; 
otherwise, an $\Sk$-semishape is called an $\Sk$-\emph{shape}, or simply
a \textit{$k$-star}. 
Namely, an $\Sk$-shape in $A$ is a `distinct $k$-star of $3$-APs' 
in $A$. See figure~\ref{fig1}.

\begin{center}
\begin{figure}[h]
\includegraphics[width=5.5cm]{./fig.8}
\caption{A $k$-star ($\Sk$-shape)}\label{fig1}
\end{figure}
\end{center}

The goal of the paper is to provide an upper bound on the size of $A\subseteq \mathbb{F}_p^n$ that is \emph{$\Sk$-shape-free}. Our main theorem states that \emph{essentially the same estimate} as \eqref{spade}, up to scalar multiple by $k^2$, applies to our setting.

\begin{theorem}[Avoiding a $k$-star]\label{theorem=crossing}
Let $p\geq 3$ be a prime and $n,k\geq 1$. Assume that $A\subseteq \mathbb{F}_p^n$ does not admit any $\Sk$-shape. Then we have 
\[\label{club}
\#A\leq k^2(\Lambda_{1,\frac{1}{3},p-1})^n.\tag{$\clubsuit$}
\]
\end{theorem}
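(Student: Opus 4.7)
The plan is to reduce the problem to the Ellenberg--Gijswijt bound \eqref{spade} by a short combinatorial argument, avoiding any direct polynomial computation.

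The first step is to translate the hypothesis into a matching condition. For each $c \in A$, let $P_c = \{\{a,b\} \subseteq A : a \neq b,\ a+b = 2c\}$. Since $p$ is odd, any pair in $P_c$ avoids $c$ itself, and since $a+b=2c$ determines $b = 2c - a$ from $a$, each element of $A \setminus \{c\}$ lies in at most one pair of $P_c$; thus $P_c$ is automatically a matching in $A \setminus \{c\}$. Consequently, an $\Sk$-shape with middle term $c$ exists as soon as $\#P_c \geq k$: choose any $k$ pairs of $P_c$, order and orient them, and the resulting $2k+1$ elements are automatically distinct. Hence the shape-free hypothesis forces $\#P_c \leq k - 1$ for every $c \in A$.

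Next, I would form the auxiliary graph $G$ on vertex set $A$ with edge set $\bigl\{\{a,b\} : a,b \in A,\ a \neq b,\ (a+b)/2 \in A\bigr\}$. Each edge $\{a,b\}$ belongs to exactly one $P_{(a+b)/2}$, so $\#E(G) = \sum_{c \in A} \#P_c \leq (k-1)\#A$ and the average degree of $G$ is at most $2(k-1)$. By the Caro--Wei bound combined with Jensen's inequality for the convex function $x \mapsto 1/(x+1)$, $G$ admits an independent set $I \subseteq A$ with $\#I \geq \#A/(2k-1)$. Any such $I$ is $3$-AP-free in $\mathbb{F}_p^n$: distinct $a,b,c \in I$ with $a+b = 2c$ would force $c = (a+b)/2 \in A$, making $\{a,b\}$ an edge of $G$ with both endpoints in $I$, a contradiction.

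Applying \eqref{spade} to $I$ then gives $\#I \leq (\Lambda_{1,\frac{1}{3},p-1})^n$, so
\[
\#A \;\leq\; (2k-1)\,(\Lambda_{1,\frac{1}{3},p-1})^n \;\leq\; k^2\,(\Lambda_{1,\frac{1}{3},p-1})^n,
\]
using $(k-1)^2 \geq 0$. I expect the only conceptually non-routine step to be the observation that $P_c$ is intrinsically a matching; this is what makes ``$A$ contains no $k$-star with middle $c$'' cleanly equivalent to the linear bound $\#P_c \leq k-1$ and lets one avoid the polynomial method entirely for this statement. Everything else is a standard application of Caro--Wei followed by an appeal to \eqref{spade}.
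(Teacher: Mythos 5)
Your proof is correct, and it is genuinely different from (and considerably more elementary than) the argument in the paper. The crucial observation is the one you flag yourself: because the map $a\mapsto 2c-a$ is an involution without fixed points on $A\setminus\{c\}$, the pair system $P_c$ is intrinsically a matching, so having $k$ pairs through a common midpoint $c$ \emph{automatically} produces $2k+1$ distinct points; hence shape-freeness is exactly equivalent to $\#P_c\leq k-1$ for all $c$, and the distinctness requirement (which is the whole difficulty of the ``avoiding shapes'' problem, as opposed to the ``prohibiting non-singleton semishapes'' problem) costs nothing. From there, the edge count $\#E(G)\leq(k-1)\#A$, Caro--Wei with Jensen, and a single application of \eqref{spade} to the independent set are all routine and correct, and you even obtain the sharper constant $2k-1$ in place of $k^2$. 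The paper instead proves the theorem by induction on $k$ with a two-case analysis on disjoint $\Skm$-shapes, a multicolored version of the Ellenberg--Gijswijt theorem, and an extendability lemma adapted from Sauermann's method; the point of that machinery is not that it is needed here, but that it generalizes to systems where your matching observation fails --- for instance the relaxed system in Section~3, where some $3$-APs have $x_{2k+1}$ as an endpoint, the pairs through a fixed $c$ form a functional graph rather than a matching, and distinctness is no longer free (and a fortiori the $\mathrm{W}$-shape of the sequel). So your argument buys brevity, elementarity, and a better constant for this specific star configuration, while the paper's argument buys a template that survives when the coefficient structure is less symmetric.
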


We emphasize that the following two types of problems are completely different in general: the problem of \emph{prohibiting semishapes that are not singletons}, and that of \emph{avoiding shapes}. Here a semishape is called a \emph{singleton semishape} if it consists of one point. For instance, it is rather `straightforward' to observe that the same bound as \eqref{spade} applies if we furthermore require that the set $A\subseteq \mathbb{F}_p^n$ does not admit non-singleton $\Sk$-semishapes. Indeed, if $(a,b,c)$ is a non-degenerate $3$-AP in $A$, then $(a,c,b,b,\ldots,b)$ is a non-singleton $\Sk$-semishape. This argument \emph{breaks down} in our setting. Tao's \emph{slice rank method} \cite{Tao}, which reformulated the proof technique in \cite{EllenbergGijswijt}, works effectively for problems of the former type. In contrast, there is no direct interpretation in terms of slice rank for problems of the latter type. Compare with \cite[Definition~2.2]{MimuraTokushige}.

Naslund addressed \cite{Naslund} the problem of the latter type for the equation $x_1+x_2+\cdots +x_p=0$ in $\mathbb{F}_p^n$. Recently, Sauermann \cite{Sauermann} has provided substantial improvement of the upper bound in that setting. Our proof of Theorem~\ref{theorem=crossing} is inspired by Sauermann's method. We note that there are two major differences from her original argument. First, one of the keys \cite[Lemma~3.1]{Sauermann} to her proof collapses when we collect $\Sk$-semishapes of the form $(a,c,b,b,\ldots,b)$, which appeared in the argument above. This issue arises from a lack of the full symmetry of the coefficients in the system $\Sk$ of equations. We overcome this difficulty by \emph{choosing a `well-behaved' labeling of points} in semishapes from a fixed point-configuration type; see the argument in `Case~2' in the proof of Theorem~\ref{theorem=crossing}. Secondly, in our modification (Lemma~\ref{lemma=extendability}) of the key lemma \cite[Lemma~3.1]{Sauermann}, works `too' ideally. It enables us to have a considerably better bound in Theorem~\ref{theorem=crossing}: the direct application to Sauermann's method would give a bound of the form $\#A \leq C \left(\sqrt{\Lambda_{1,\frac{1}{3},p-1}\cdot p}\right)^n$. 

\section{Proof of Theorem~$\ref{theorem=crossing}$}\label{section=proof}
For $X_1,X_2,\ldots ,X_{2k+1}\subseteq  \mathbb{F}_p^n$, an $\Sk$-semishape in $X_1\times X_2\times \cdots \times X_{2k+1}=:\XX$ means $(x_1,x_2,\ldots,x_{2k+1})\in\XX$ that is a solution of $\Sk$. Two $\Sk$-semishapes $(x_1,x_2,\ldots,x_{2k+1})$ and $(x'_1,x'_2,\ldots,x'_{2k+1})$ are said to be \emph{disjoint} if $\{x_1,x_2,\ldots,x_{2k+1}\}$ and $\{x'_1,x'_2,\ldots,x'_{2k+1}\}$ are disjoint. We define disjointness of $3$-APs in the same manner as above. Following \cite{Sauermann}, we define the following notion: for $1\leq i<j\leq 2k+1$, we say that $(x,y)\in X_i\times X_j$ is \emph{$(i,j)$-extendable} in $\XX$ if there exists an $\Sk$-semishape in $\XX$ whose $i$-th term is $x$ and $j$-th term is $y$. Here for a semishape $(x_1,x_2,\ldots,x_{2k+1})$, the $i$-th term of it means $x_i$. 

In the proof of Theorem~\ref{theorem=crossing}, we employ the following result, which may be seen as the \textit{multicolored version} of the bound of `non-singleton $\Sk$-semishape'-free subsets. See \cite[Section~$3$]{BCCGNSU} for more backgrounds on the multicolored version. This might be known to experts; at least, the statement for $k=1$ is a well-known generalization of the Ellenberg--Gijswijt theorem \cite{EllenbergGijswijt}. Nevertheless, we include an outlined proof for the reader's convenience.

\begin{theorem}[Multicolored $\Sk$-free set; compare with \cite{EllenbergGijswijt}]\label{theorem=multicolor}
Let $s\in \mathbb{N}$ and let
\[
 M=\{(x_{1,i},x_{2,i}\ldots,x_{2k+1,i})\in(\mathbb{F}_p^n)^{2k+1}:
1\leq i\leq s\}.
\]
Assume that they satisfy that 
\[
(x_{1,i_1},x_{2,i_2},\ldots,x_{2k+1,i_{2k+1}})\textrm{ is an $\Sk$-semishape}\quad \Longleftrightarrow \quad i_1=i_2=\cdots=i_{2k+1}.
\]
Then, we have $s\leq (\Lambda_{1,\frac{1}{3},p-1})^n$.
\end{theorem}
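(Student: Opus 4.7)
The plan is to reduce Theorem~\ref{theorem=multicolor} to its $k=1$ case, which is the well-known multicolored generalization of the Ellenberg--Gijswijt theorem (cf.\ \cite[Section~3]{BCCGNSU}). Specifically, from the given $(2k+1)$-tuples $\{(x_{1,i},\ldots,x_{2k+1,i})\}_{i=1}^s$ I would extract the triples $\{(x_{1,i}, x_{2,i}, x_{2k+1,i})\}_{i=1}^s$ on coordinates $1,2,2k+1$, and show that they satisfy the ``multicolored $3$-AP'' hypothesis
\[
x_{1,i_1}+x_{2,i_2}-2\,x_{2k+1,i_{2k+1}}=0 \quad \Longleftrightarrow \quad i_1=i_2=i_{2k+1}.
\]
Granted this equivalence, the $k=1$ case of Theorem~\ref{theorem=multicolor} immediately yields $s \leq (\Lambda_{1,\frac{1}{3},p-1})^n$.

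The backward direction will be immediate: the hypothesis of Theorem~\ref{theorem=multicolor}, applied with all indices equal to $i$, says that the diagonal tuple $(x_{1,i},\ldots,x_{2k+1,i})$ is an $\Sk$-semishape, so in particular its $j=1$ equation holds. The main step will be the forward direction, which I would handle by a short \emph{extension-by-diagonal} argument. Assuming the displayed equation for some $(i_1, i_2, i_{2k+1})$, set $i_{2j-1} := i_{2j} := i_{2k+1}$ for $j=2,\ldots,k$. For each such $j$, the equation $x_{2j-1,i_{2k+1}}+x_{2j,i_{2k+1}}-2\,x_{2k+1,i_{2k+1}}=0$ then holds because the diagonal tuple at index $i_{2k+1}$ is itself an $\Sk$-semishape. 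Together with the $j=1$ equation, this exhibits a full $\Sk$-semishape on the multi-index $(i_1, i_2, i_{2k+1}, i_{2k+1},\ldots,i_{2k+1})$; the ``only if'' part of the hypothesis of Theorem~\ref{theorem=multicolor} then forces all $2k+1$ of these indices to coincide, and in particular $i_1=i_2=i_{2k+1}$, as required.

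The only real work is the forward verification above; once the missing coordinates are filled in using the diagonal $\Sk$-semishapes, the problem reduces cleanly to the $k=1$ multicolored Ellenberg--Gijswijt bound, which I would invoke as a black box. In particular, no slice-rank / Croot--Lev--Pach polynomial analysis needs to be carried out afresh for $k\geq 2$, which is consistent with the statement being presented as an ``outlined proof for the reader's convenience.''
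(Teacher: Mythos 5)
Your proposal is correct and follows essentially the same route as the paper: both reduce to the $k=1$ multicolored Ellenberg--Gijswijt bound (taken as known) by projecting onto coordinates $1,2,2k+1$ and filling in the remaining coordinates with the diagonal index $i_{2k+1}$ to upgrade a non-diagonal $3$-AP to a non-diagonal $\Sk$-semishape. The only difference is presentational: the paper argues by contradiction, whereas you verify the $k=1$ hypothesis directly.
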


\begin{proof}
Here we only provide an outlined proof. For $k=1$, as we mentioned above, the assertion is well known to experts. It follows from the proof of the Ellenberg--Gijswijt bound \cite{EllenbergGijswijt} by slice rank method. See \cite[Section~$3$]{BCCGNSU}; we also refer the reader to \cite[Definition~4.2 and Corollary~C]{MimuraTokushige} and \cite[Section~9]{LovaszSauermann} for more details on the use of the slice rank method in the multicolored setting.

For $k\geq 2$, we may reduce the problem to the case where $k=1$ in the following argument; it is similar to the `straightforward' argument mentioned in the Introduction from $3$-APs to $\Sk$-semishapes. Fix $k\geq 2$. Suppose that  $s> (\Lambda_{1,\frac{1}{3},p-1})^n$. Apply the assertion of Theorem~\ref{theorem=multicolor} for $k=1$ to $X_1\times X_2\times X_{2k+1}$. Here for $j\in [2k+1]$, we define  $X_j:=\{x_{j,i}:1\leq i\leq s \}$. It implies that there exist $i_1,i_2,i_{2k+1}$ with $\#\{i_1,i_2,i_{2k+1}\}\geq 2$ such that $(x_{1,i_1},x_{2,i_2},x_{2k+1,i_{2k+1}})$ is a $3$-AP. Then, the following $(2k+1)$-tuple
\[
(x_{1,i_1},x_{2,i_2},x_{3,i_{2k+1}},x_{4,i_{2k+1}},\ldots,x_{2k,i_{2k+1}},x_{2k+1,i_{2k+1}})
\]
is an $\Sk$-semishape in $X_1\times X_2\times \cdots \times X_{2k+1}$; it violates the condition above for $k$. Therefore, we obtain the bound $s\leq (\Lambda_{1,\frac{1}{3},p-1})^n$ even for $k\geq 2$.
\end{proof}

\begin{proof}[Proof of Theorem~$\ref{theorem=crossing}$]
We prove the statement of the theorem by induction on $k$. 
The initial case $k=1$ is the well-known result of Ellenberg and Gijswijt
\cite{EllenbergGijswijt}. 

So we move on to the induction step. Let $k\geq 2$.
We assume that the statement holds for $k-1$, and we shall show the statement 
for $k$. (A reader may read the proof below assuming $k=3$, which
essentially contains everything needed for the general case.) In the induction step, we consider both $\Sk$-(semi)shapes and $\Skm$-(semi)shapes. Since (semi)shapes of our main concern are $\Sk$-(semi)shapes, we call them simply  (semi)shapes. Hence, a `(semi)shape' in the remaining part of the proof always means an $\Sk$-(semi)shape; we do not use any abbreviation for an $\Skm$-(semi)shape.

Let $A\subseteq \mathbb{F}_p^n$ be shape-free. 
Let $t:=\left\lceil\frac{1}{k^2}\#A \right\rceil$. 
We distinguish the following two cases.
\begin{itemize}
\item[\emph{Case~$1$.}] There does not exist $t$ disjoint $\Skm$-shapes in $A$.
\item[\emph{Case~$2$.}] There \emph{does} exist $t$ disjoint $\Skm$-shapes in $A$.
\end{itemize}

In Case~$1$, take a maximal family of disjoint $\Skm$-shapes in $A$, and delete all the points in these $\Skm$-shapes from $A$. 
Note that each deleted $\Skm$-shape has $2k-1$ points.
So the resulting set $A'$ satisfies
\[
\#A'\geq \#A-(2k-1)(t-1)\geq \left(\frac{k-1}k\right)^2\#A.  
\]
By construction, this $A'\subseteq \mathbb{F}_p^n$ is $\Skm$-shape-free, 
and it follows from the induction hypothesis that $\#A'\leq (k-1)^2(\Lambda_{1,\frac{1}{3},p-1})^n$. Hence, in Case~$1$, we obtain that 
\[
\#A\leq k^2(\Lambda_{1,\frac{1}{3},p-1})^n.
\]

In Case~$2$, take $t$ disjoint $\Skm$-shapes in A. 
For each $\Skm$-shape $\aa:=(a_1,a_2,\dots,a_{2k-1})$, where the last element 
$a_{2k-1}$ is the common middle term of the 3-APs, we see that
\[
\tilde\aa:=(a_1,a_2,a_1,a_2,a_3,\ldots,a_{2k-1})\in A^{2k+1}, 
\]
which is a concatenation of $(a_1,a_2)$ and $\aa$,
is a semishape in $A$. Define a collection $M$ of $t$ disjoint 
semishapes 
\[
M:=\{\tilde\aa\in A^{2k+1}: \aa\in A^{2k-1} 
\textrm{ is in the chosen $t$ disjoint $\Skm$-shapes}\}.
\]
For each $i\in [2k+1]$, let $X_i$ as the set of all $i$-th term of semishapes in $M$. By disjointness, $\#X_i=t$ for each $i\in [2k+1]$. 
Furthermore, $2k-1$ sets $X_1$, $X_3$, $X_5,X_6,\ldots,X_{2k+1}$ are pairwise disjoint; $X_1=X_3$ and $X_2=X_4$. We claim that every semishape 
$(x_1,x_2,\ldots,x_{2k+1})$ in $\XX:=X_1\times X_2\times \cdots \times X_{2k+1}$ satisfies that $x_1=x_3$ and that $x_2=x_4$. Indeed, if $x_1\ne x_3$, then $x_1+x_2=x_3+x_4(=2x_{2k+1})$ implies that $x_2\neq x_4$.
Using pairwise disjointness of $X_1,X_2,X_5,X_6,\ldots ,X_{2k+1}$, we have that
$x_1,x_2,\ldots,x_{2k+2}$ are all distinct $2k+1$ points which consist
of a shape in $A$, a contradiction.
Similarly, we have that $x_2=x_4$. Now define
\[
B:=\{(x,y)\in X_1\times X_{2k+1}:\textrm{ $(x,y)$ is  $(1,2k+1)$-extendable in 
$\XX$}\}.
\]
The following is the key lemma to the proof, which substitutes for \cite[Lemma~3.1]{Sauermann}:

\begin{lemma}\label{lemma=extendability}
Let $(x,y)\ne (x',y')$ be two elements in  $B$.  Then we have that $y\ne y'$.
\end{lemma}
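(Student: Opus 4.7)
The plan is to argue by contradiction: assume $(x,y)$ and $(x',y)$ both lie in $B$ with $x\ne x'$, and produce an $\Sk$-shape inside $A$, contradicting the shape-freeness of $A$. By the structural observation preceding the lemma, every semishape in $\XX$ has the form $(u_1,u_2,u_1,u_2,u_5,\ldots,u_{2k+1})$; hence the $(1,2k+1)$-extendability of $(x,y)$ yields a semishape $(x,x_2,x,x_2,x_5,x_6,\ldots,x_{2k},y)\in\XX$ with $x+x_2=2y$ and $x_{2i-1}+x_{2i}=2y$ for $3\le i\le k$. Applying the same to $(x',y)$ gives $(x',x_2',x',x_2',x_5',\ldots,x_{2k}',y)\in\XX$ with $x'+x_2'=2y$. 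In particular $x_2-x_2'=x'-x\ne 0$, so $x_2\ne x_2'$.

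The key construction is a spliced tuple that uses the ``$(x',x_2')$-piece'' of the second extension together with the ``$(x_5,\ldots,x_{2k})$-piece'' of the first:
\[
\tau:=(x,x_2,x',x_2',x_5,x_6,\ldots,x_{2k},y)\in A^{2k+1}.
\]
I will check that $\tau$ satisfies the system $\Sk$: the first equation $x+x_2=2y$ is furnished by the first extension, the second equation $x'+x_2'=2y$ by the second extension, and the remaining $k-2$ equations $x_{2i-1}+x_{2i}=2y$ by the first extension. So $\tau$ is an $\Sk$-semishape in $A$.

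It then remains to verify that all $2k+1$ entries of $\tau$ are distinct, which upgrades $\tau$ to an $\Sk$-shape. The entries $x,x_2,x_5,x_6,\ldots,x_{2k},y$ are pairwise distinct because they lie in the pairwise disjoint sets $X_1,X_2,X_5,X_6,\ldots,X_{2k+1}$. The remaining two entries $x'\in X_1$ and $x_2'\in X_2$ are automatically distinct from the entries in the other $X_i$, and distinct from one another since $X_1\cap X_2=\emptyset$; finally $x'\ne x$ by assumption and $x_2'\ne x_2$ as noted above. Thus $\tau$ is an $\Sk$-shape in $A$, giving the desired contradiction.

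The only delicate point is ensuring that the spliced tuple $\tau$ actually satisfies \emph{all} equations of $\Sk$. This is exactly where the ``well-behaved labeling'' of semishapes in $M$ pays off: by duplicating $(a_1,a_2)$ into positions $3,4$ of each $\tilde{\mathbf a}\in M$, the second equation of $\Sk$ becomes a clone of the first, so replacing the entries in positions $3,4$ by \emph{any} pair $(x',x_2')$ coming from another extension (automatically satisfying $x'+x_2'=2y$) preserves the whole system. This built-in flexibility is what fuels the mixing construction and is the reason Lemma~\ref{lemma=extendability} comes out so cleanly.
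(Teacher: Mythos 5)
Your proposal is correct and follows essentially the same route as the paper: both form the spliced tuple $(x,x_2,x',x_2',x_5,\ldots,x_{2k},y)$ and derive a contradiction, the paper by citing its earlier claim that every semishape in $\XX$ has equal first and third entries, and you by directly re-running the distinctness check (via pairwise disjointness of $X_1,X_2,X_5,\ldots,X_{2k+1}$ plus $x\ne x'$ and $x_2\ne x_2'$) that underlies that claim. The two arguments are the same in substance.
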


\begin{proof}
Suppose the contrary: $(x,y)\ne (x',y')$ and $y=y'$. Then $x\ne x'$. By assumption, there must exist two distinct semishapes 
\[
(x,x_2,x,x_2,x_5,x_6\ldots,x_{2k},y) \text{ and }
(x',x'_2,x',x'_2,x'_5,x'_6\ldots,x'_{2k},y) 
\]
in $\XX$.
However, then we would find a semishape 
$(x,x_2,x',x'_2,x_5,x_6\ldots,x_{2k},y)$ in $\XX$, 
which violates $x_1=x_3$. A contradiction.
\end{proof}

Note that Lemma~\ref{lemma=extendability} has a great byproduct: It implies that $B\ni (x,y)\mapsto y\in X_{2k+1}$ is \emph{bijective}; in particular, it follows that $\#B=t$. This explains the second difference from Sauermann's original argument, as we discussed in the Introduction. (In her setting, the outcome is that $\#B\leq p^n$.) 
In what follows, we claim that every semishape in $\XX$ is in fact an element in $M$, in other words, we verify that we can apply 
Theorem~\ref{theorem=multicolor} to $M$.
To show this, we change the indices and express $M$ as 
\[
M=\{(x_{1,i},x_{2,i},\ldots,x_{2k+1,i}):1\leq i\leq t\}.  
\]
Assume that $\bm x:=(x_{1,i_1},x_{2,i_2},\ldots ,x_{2k+1,i_{2k+1}})$ is a semishape in $\XX$. Then, it is easy to see by disjointness that $i_1=i_3$ and $i_2=i_4$; by our observation on $(1,2k+1)$-extendable pairs, it also follows that $i_1=i_{2k+1}$. These equalities imply that $i_1=i_2=i_3=i_4=i_{2k+1}=:i$. Indeed, note that $x_{1,i_1}+x_{2,i_2}-2x_{2k+1,i_{2k+1}}=0$ and $x_{3,i_1}+x_{4,i_2}-2x_{2k+1,i_{2k+1}}=0$. 
What remains is to show that $i_5=i_6=\cdots=i_{2k}=i$. 
Assume the contrary, say, $i_5\neq i$. Then it follows from
$x_{5,i}+x_{6,i}=x_{5,i_5}+x_{6,i_6}(=2x_{2k+1,i})$ that $i_6\neq i$.
Consider the $(2k+1)$-tuple obtained from $\bm x$ by
replacing $x_{3,i}, x_{4,i}$ with $x_{5,i_5},x_{6,i_6}$, that is, 
\[
(x_{1,i},x_{2,i},x_{5,i_5},x_{6,i_6},x_{5,i},x_{6,i},x_{7,i},x_{8,i}
\ldots,x_{2k+1,i}). 
\]
It, however, must be a shape in $A$ by disjointness; this contradicts the choice of $A$. Hence we prove the claim.

To close up the argument for Case~$2$, apply Theorem~\ref{theorem=multicolor} to $M$. We obtain that 
\[
t=\#M\leq (\Lambda_{1,\frac{1}{3},p-1})^n.
\]
Recall that $t=\left\lceil\frac{1}{k^2}\#A\right\rceil$. Hence, in Case~$2$, we conclude that 
\[
\#A\leq k^2(\Lambda_{1,\frac{1}{3},p-1})^n.
\]

Unifying both cases yields \eqref{club}.
\end{proof}

\section{Further direction}
In Theorem~\ref{theorem=crossing} we considered 3-APs sharing the same middle
term. 
We may relax this condition a little bit. More precisely, the statement of 
Theorem~\ref{theorem=crossing} still holds if we replace $\Sk$
with the following system of equations:
\[
\left\{
\begin{aligned}
x_1-2x_2+x_{2k+1}&=0,\\
x_3-2x_4+x_{2k+1}&=0,\\
\vdots\qquad\quad\\
x_{2k-1}-2x_{2k}+x_{2k+1}&=0.
\end{aligned}
\right.
\]
The reader is invited to verify that the
proof of Theorem~\ref{theorem=crossing} is easily extended to the case above. However, the following problem is already beyond the reach of the proof
 of Theorem~\ref{theorem=crossing}.

 \begin{problem}
 Let $A\subseteq \mathbb{F}_p^n$ do not admit any shape defined by
 \[
 \left\{
 \begin{aligned}
 x_1-2x_2+x_{5}&=0,\\
 x_3+x_4-2x_{5}&=0.
 \end{aligned}
 \right.
 \]
 Then is it true
 $\# A\leq C(p)^n$ for some constant $C(p)<p$ depending only on $p$?
 \end{problem}

If two variables appear
more than once in the system, then the situation becomes much involved.
In our sequel paper \cite{MimuraTokushige}, we also investigate semishapes associated with the following system of inequalities:
\[
\left\{\begin{aligned}
x_1-x_2-x_3+x_4&=0,\\
x_1-2x_3+x_5&=0.
\end{aligned}
\right.
\tag{$\mathcal{S}_{\mathrm{W}}$}
\]
Geometrically, an $\mathcal{S}_{\mathrm{W}}$-shape $(x_1,x_2,x_3,x_4,x_5)$ is the combination shape of  a `parallelogram $(x_1,x_2,x_4,x_3)$' and a `$3$-AP $(x_1,x_3,x_5)$'; it may be regarded as a `\textit{$\mathrm{W}$ shape}'. See figure~\ref{fig2}. Here we announce the following result in \cite[Theorem~1.1 and Theorem~B]{MimuraTokushige}:

\begin{center}
\begin{figure}[h]
\includegraphics[width=5.0cm]{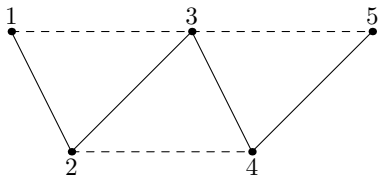}  
\caption{An $\mathcal{S}_{\mathrm{W}}$-shape}\label{fig2}
\end{figure}
\end{center}

\begin{theorem}[Avoiding a `$\mathrm{W}$ shape']\label{theorem=Wshape}
Let $p\geq 3$ be a prime. Then there exists $C_{\mathrm{W}}(p)<p$ that satisfies the following:  For $n\geq 1$, assume that $A\subseteq \mathbb{F}_p^n$ does not admit any $\mathcal{S}_{\mathrm{W}}$-shape. Then we have 
\[
\#A\leq 7\left(\sqrt{C_{\mathrm{W}}(p)\cdot p}\right)^n.
\]
Furthermore, we have that 
\[
C_{\mathrm{W}}(p)\leq \inf\{\max\{\Lambda_{1,\alpha,p-1},\Lambda_{2,\beta,p-1}\}: 3\alpha+2\beta=1,\ \alpha,\beta>0\}. 
\]
\end{theorem}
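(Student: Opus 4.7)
The approach adapts the Sauermann-style dichotomy from the proof of Theorem~\ref{theorem=crossing} to the richer 5-variable $\mathcal{S}_{\mathrm{W}}$, combined with a multi-colored slice rank bound that handles the parallelogram equation and the 3-AP equation simultaneously. I would take a maximal family of $t$ pairwise disjoint 3-APs $(x_{1,i}, x_{3,i}, x_{5,i})$ in $A$. If $t \leq (\sqrt{C_{\mathrm{W}}(p)\cdot p})^n$, delete these $3t$ points to obtain a 3-AP-free set; Ellenberg--Gijswijt then bounds $\#A$ by $3t + \Lambda_{1,\frac{1}{3},p-1}^n$, which is easily bounded by $7(\sqrt{C_{\mathrm{W}}(p) \cdot p})^n$ since $\Lambda_{1,\frac{1}{3},p-1}\leq C_{\mathrm{W}}(p) \leq p$.

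When $t$ is large, I would proceed by a multi-color argument. For each 3-AP in the family, consider parallelogram extensions $(x_{2,i}, x_{4,i}) \in A^2$ satisfying $x_{4,i} - x_{2,i} = x_{3,i} - x_{1,i}$; shape-freeness of $A$ forces rigidity, producing an extendability lemma analogous to Lemma~\ref{lemma=extendability}. However, as foreshadowed in the introduction, the asymmetry of $\mathcal{S}_{\mathrm{W}}$ (the two equations share $x_1$ and $x_3$, while the parallelogram variables $x_2, x_4$ play a qualitatively different role than $x_5$) permits only an injection into a set of size $\leq p^n$, rather than a bijection. This undeformed use of Sauermann's method is precisely the source of the $\sqrt{p}$ factor in the final bound.

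The core technical input is a multi-colored slice rank bound generalizing Theorem~\ref{theorem=multicolor}. With weights $\alpha$ on each of the three 3-AP variables $x_1, x_3, x_5$ and $\beta$ on each of the two parallelogram-only variables $x_2, x_4$, subject to $3\alpha + 2\beta = 1$, one constructs a polynomial on $(\mathbb{F}_p^n)^5$ whose partial degrees are $\alpha(p-1)$ and $2\beta(p-1)$ in these respective coordinates. Bounding its slice rank on the diagonal of W-semishapes then yields $t \leq \max\{\Lambda_{1,\alpha,p-1}, \Lambda_{2,\beta,p-1}\}^n$; optimizing over admissible $(\alpha,\beta)$ produces the claimed $C_{\mathrm{W}}(p)$ formula. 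Combined with the $p^n$ factor from the extendability injection, this gives $\#A \leq 7(\sqrt{C_{\mathrm{W}}(p) \cdot p})^n$ after bookkeeping of the constants from both cases.

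The principal difficulty lies in the multi-colored slice rank step with two coupled equations: because $x_1$ and $x_3$ appear in both equations of $\mathcal{S}_{\mathrm{W}}$, one cannot simply tensor together separate polynomial constructions for the parallelogram and the 3-AP. Instead, a single polynomial on all five coordinates must simultaneously witness both equations, with degrees carefully distributed; the constraint $3\alpha + 2\beta = 1$ and the $\min$-$\max$ form of $C_{\mathrm{W}}(p)$ are the combinatorial shadow of this balancing. A secondary obstacle is formulating the extendability lemma itself through a `well-behaved' labeling of W-semishapes, in the spirit of the Case~2 argument in the proof of Theorem~\ref{theorem=crossing}.
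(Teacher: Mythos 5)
First, a point of order: this paper does not prove Theorem~\ref{theorem=Wshape} at all. The result is only \emph{announced} here, with the proof deferred to the sequel \cite[Theorem~1.1 and Theorem~B]{MimuraTokushige}, so there is no in-paper argument to compare yours against. Judged on its own terms, your outline identifies the architecture one would expect from the hints in Section~\ref{section=introduction}: a Sauermann-type dichotomy, an extendability lemma that (unlike the bijective Lemma~\ref{lemma=extendability}) yields only an injection into a set of size at most $p^n$ and hence the $\sqrt{\,\cdot\,p}$ loss, and a weighted multicolored slice-rank bound whose optimization produces the infimum defining $C_{\mathrm W}(p)$. That is a reasonable plan, but it is a plan, not a proof, and at least two of its concrete assertions appear to be wrong.

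First, your Case~1 closes by invoking $\Lambda_{1,\frac13,p-1}\le C_{\mathrm W}(p)$. This is in tension with the theorem's own claim that $C_{\mathrm W}(p)\le\inf\{\max\{\Lambda_{1,\alpha,p-1},\Lambda_{2,\beta,p-1}\}:3\alpha+2\beta=1\}$: letting $\beta\to0^+$ (so $\alpha\to\frac13$) sends $\Lambda_{2,\beta,p-1}\to1$, and then decreasing $\alpha$ slightly strictly decreases $\Lambda_{1,\alpha,p-1}$ while keeping $\Lambda_{2,\beta,p-1}$ small, so the infimum is \emph{strictly below} $\Lambda_{1,\frac13,p-1}$. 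Absorbing the Ellenberg--Gijswijt term into $\bigl(\sqrt{C_{\mathrm W}(p)\cdot p}\bigr)^n$ therefore requires verifying $\Lambda_{1,\frac13,p-1}^2\le C_{\mathrm W}(p)\cdot p$, which you have not done and which may force a different dichotomy (e.g., based on disjoint non-degenerate $\mathcal S_{\mathrm W}$-semishapes rather than disjoint $3$-APs). Second, your degree allocation in the slice-rank step is backwards: in the natural product of the two delta polynomials for $\mathcal S_{\mathrm W}$, the coordinates whose individual degree can reach $2(p-1)$ --- and which are therefore counted by $\Lambda_{2,\beta,p-1}$ --- are those of $x_1$ and $x_3$, the variables occurring in \emph{both} equations, whereas $x_2,x_4,x_5$ each occur in exactly one equation and have individual degree at most $p-1$; you attach the $m=2$ count to $x_2,x_4$. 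Finally, the passage from ``the extendable pairs inject into a set of size $p^n$'' plus ``$t\le\max\{\Lambda_{1,\alpha,p-1},\Lambda_{2,\beta,p-1}\}^n$'' to the geometric-mean bound is the quantitative heart of Sauermann's method (a delicate counting over extendable pairs), and dismissing it as ``bookkeeping of the constants'' leaves the main step of the argument unaddressed.
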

For sufficiently large $p$, we have that $C_{\mathrm{W}}(p)\leq (0.985)^2p$.
See \cite[Remark~4.3]{MimuraTokushige} for details on this estimate.

\section*{Acknowledgments}
The authors are grateful to the members, Wataru Kai, Akihiro Munemasa, Shin-ichiro, Seki and Kiyoto Yoshino,  of the ongoing seminar on the Green--Tao theorem (on arithmetic progressions in the set of prime numbers) at Tohoku University launched in  October, 2018. Thanks to this seminar, the first-named author has been intrigued with the subject of this paper. They thank Takumi Yokota for comments. 
Masato Mimura is supported in part by JSPS KAKENHI Grant Number JP17H04822, and
Norihide Tokushige is supported by JSPS KAKENHI Grant Number JP18K03399.

\bibliographystyle{amsalpha}
\bibliography{kstar.bib}
\end{document}